\newtheorem{theorem}{Theorem}[section]
\newtheorem{lemma}[theorem]{Lemma}
\newtheorem{cor}[theorem]{Corollary}
\numberwithin{equation}{section}
\numberwithin{figure}{section}
\newcommand{\arxiv}[1]{\href{http://arxiv.org/abs/#1}{\texttt{arXiv:#1}}}
\title{\bf Enumeration of lozenge tilings of halved hexagons with a boundary defect}
\author{Ranjan Rohatgi\\
\small Department of Mathematics\\[-0.8ex]
\small Indiana University\\[-0.8ex] 
\small Bloomington, IN, 47405\\
\small\tt rrohatgi@indiana.edu\\}
\date{}
\begin{document}

\maketitle

 \begin{abstract}
 We generalize a special case of a theorem of Proctor on the enumeration of lozenge tilings of a hexagon with a maximal staircase removed, using Kuo's graphical condensation method.  Additionally, we prove a formula for a weighted version of the given region.  The result also extends work of Ciucu and Fischer.  By applying the factorization theorem of Ciucu, we are also able to generalize a special case of MacMahon's boxed plane partition formula.  
 \end{abstract}

  \section{Introduction}
 
 The triangular lattice is the tiling of the plane by unit equilateral triangles.  Without loss of generality, we assume that the lattice comprises horizontal lines, as well as lines whose angles of incidence to the horizontal lines is either 60 or 120 degrees.  A \textit{region} in the triangular lattice is any finite union of these unit triangles and a \textit{lozenge} is any union of two unit triangles which share an edge.  A \textit{lozenge tiling} of a region $R$ is any covering of all unit triangles in $R$ by non-overlapping lozenges.  It is clear that a region must be have the same number of upward-pointing unit triangles as downward-pointing ones to have any tilings at all, since a lozenge contains one unit triangle of each type.  We say that such a region is \textit{balanced}.  We can assign to any lozenge that could be used in a tiling a  weight, $w$, which is a positive real number.  An \textit{unweighted} region has all weights equal to 1.    
 
The weight of a lozenge tiling of $R$ is the product of all the weights of the lozenges used in the tiling.  We denote by $M(R)$ the \textit{matching generating function} of the region $R$, which is the sum of the weights of all tilings of $R$.  For an unweighted region, the matching generating function simply gives the number of tilings of the region.
 
 MacMahon's work in \cite{macmahon} proved that for a hexagon with side-lengths $b,c,d,b,c,d$, the number of lozenge tilings is given by the formula 
 \begin{equation}\label{macmahonthm}
\frac{H(b)H(c)H(d)H(b+c+d)}{H(b+c)H(b+d)H(c+d)},
 \end{equation}
where we define the hyper factorials $H(n)$ for positive integers $n$ by
\begin{equation}
H(n):=0!1!\ldots (n-1)!
\end{equation}

The simplicity of (\ref{macmahonthm}) has inspired many to look for generalizations or similar results.  We present ours in the next section.

\section{Statement of Main Results}

We define two regions, $R_{a,k,j,x}$ and $R'_{a,k,j,x}$, the latter of which is a weighted version of the former.  The north edge of each region has length $x$, followed by a northeast edge of length $a+2k$, a southeast edge of length $a$, and a south edge of length $x+k$.  Finally, we close the regions by connecting the west endpoints of the north and south edges via a zigzag line whose unit edges alternate northwest and northeast.  This zigzag line comprises $2a+2k$ unit segments, or $a+k$ ``bumps."  To balance the region, we remove $k$ consecutive upward-pointing unit triangles from the northeast side, after leaving a gap of $j-1$ unit triangles.  It is evident that $j\in\{1,2,\ldots,a+k+1\}$.  In the pictures below we've removed the forced lozenges due to the ``spikes" on the northeast side.  This unweighted region is $R_{a,k,j,x}$.  If we weight each of the vertical lozenges in the ``bumps" by a factor of $\frac{1}{2}$ we have $R'_{a,k,j,x}$.  In Figure~\ref{rak}, the lozenges with ovals are the weighted ones.  
 
 \begin{figure}[h]
\centering
\begin{subfigure}{.45\textwidth}
\centering
\includegraphics[height=2.5in]{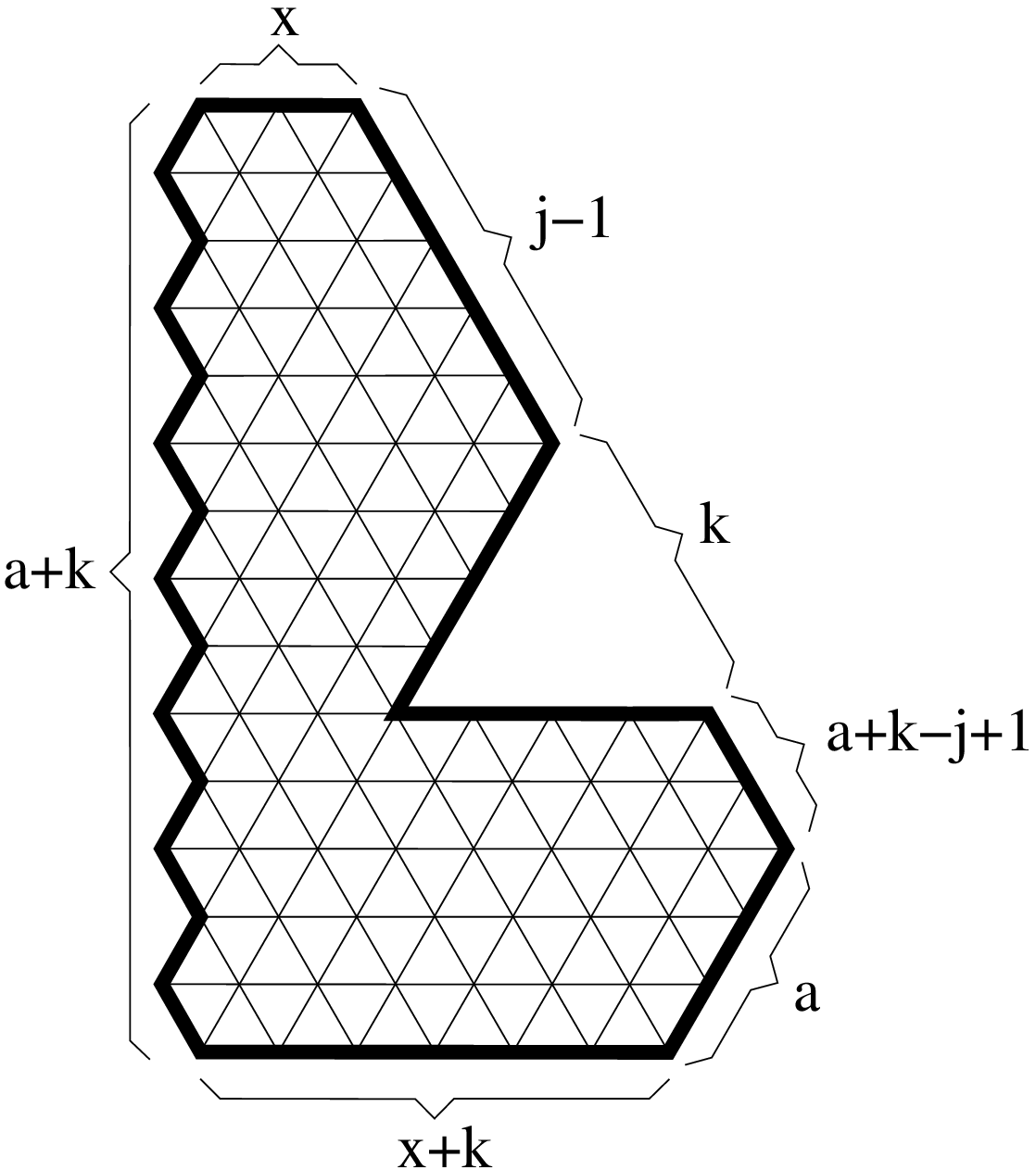}
\caption{$R_{a,k,j,x}$}
\end{subfigure}
\begin{subfigure}{.45\textwidth}
\centering
\includegraphics[height=2.5in]{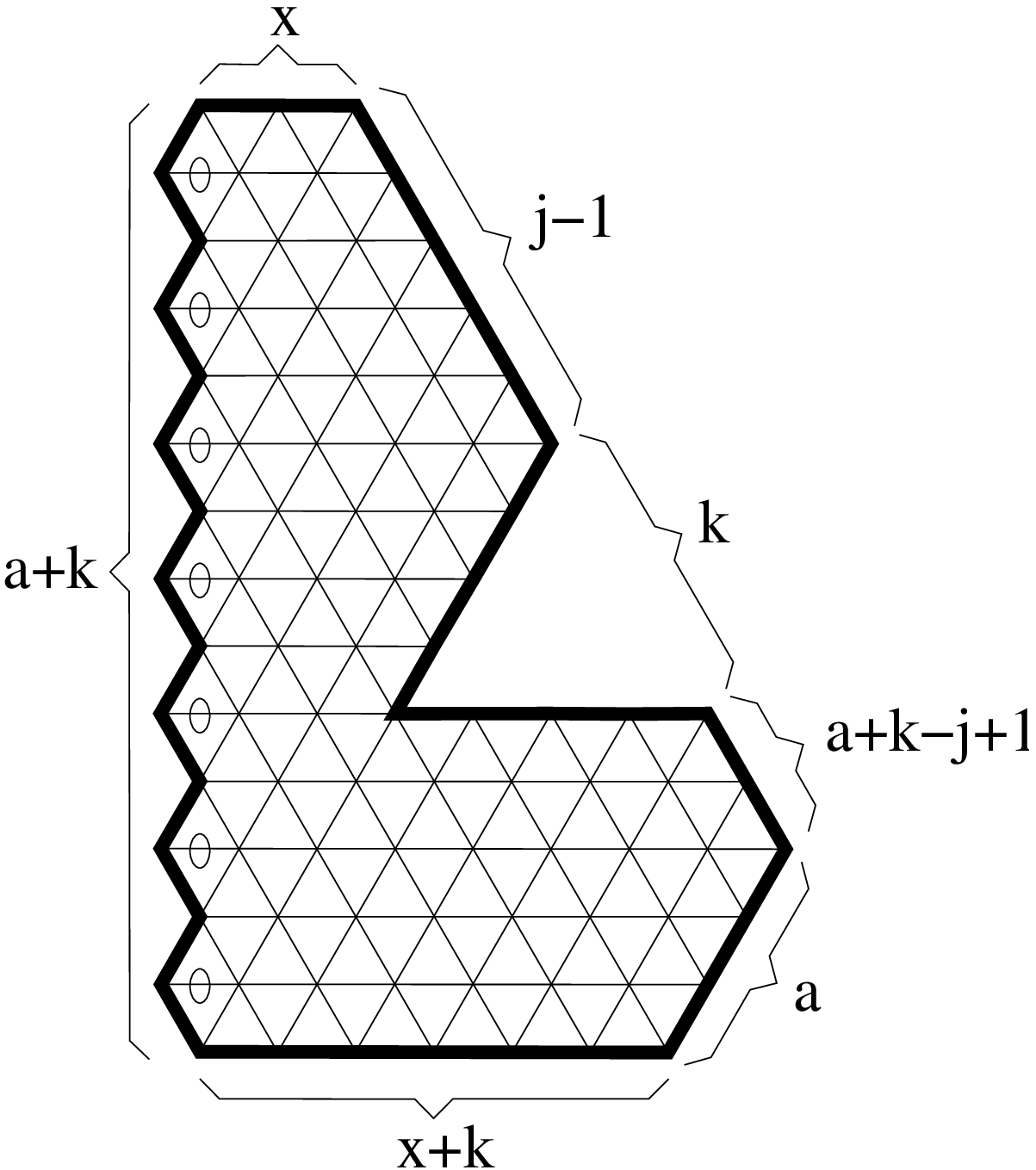}
\caption{$R'_{a,k,j,x}$}
\end{subfigure}
\caption{} 
\label{rak}
\end{figure}

The region $R_{a,k,jx}$ extends previous work in two ways.  Setting $k=0$ gives us a symmetric region with no unit triangles removed on the northeast side. In \cite{proctor}, Proctor generalizes this region  by extending the northwest side; here we generalize such a region by introducing a boundary defect on the northeast side.  In \cite{ciucufischer13}, Ciucu and Fischer enumerate the tilings of a region they denote $R_{x,a,k}$.  The region is identical to $R_{a,k,j,x}$ except that the position of the removed triangles on the northeast side is fixed at $j=a+k$. 

\begin{theorem}\label{rakjxthmexplicit}
\begin{multline*}
M(R_{a,k,j,x})=\left[\prod_{n=1}^a\left(\frac{x+k+n}{k+n}\right)^{f_{a}(n)}\right]\left[\prod_{n=1}^{a-1}\left(\frac{2x+2k+2n+1}{2k+2n+1}\right)^{f_{a-1}(n)}\right]\\
\times\left[\prod_{n=1}^{j-k-1}\left(\frac{k+n}{x+k+n}\right)^{f_{j-k-1}(n)}\right]\left[\prod_{n=1}^{j-k-2}\left(\frac{2k+2n+1}{2x+2k+2n+1}\right)^{f_{j-k-2}(n)}\right]\\
\times\left[\prod_{n=1}^{j-1}\frac{x+n}{2x+n}\right]\left[\prod_{1\leq n\leq m\leq j-1}\frac{2x+n+m-1}{n+m-1}\right]\\
\times\prod_{n=1}^{a-j+k+1}\left[\prod_{m=1}^{j-k}\frac{k+n+m-1}{n+m-1}\prod_{m=j-k+1}^{j-k-1+n}\frac{2k+n+m-1}{n+m-1}\right],
\end{multline*}
where $f_a(i)=\frac{a+1}{2}-|\frac{a+1}{2}-i|$.
\end{theorem}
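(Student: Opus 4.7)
The plan is to prove the formula by induction on $a+k$, using Kuo's graphical condensation applied to the dual bipartite graph of $R_{a,k,j,x}$. The base cases are $k=0$, where no triangles are removed on the northeast side and the region is the symmetric halved-hexagon enumerated by the special case of Proctor's formula referenced above, and $j=a+k$, where the defect sits adjacent to the southeast corner and the region reduces to the Ciucu--Fischer region $R_{x,a,k}$. In each base case the claimed expression must be shown to collapse to the known formula by straightforward hyperfactorial manipulations.

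For the inductive step, I would choose four boundary vertices --- typically two near the defect on the northeast side and two near the zigzag on the west --- so that Kuo's identity yields a four-term relation of the form
\[
M(R_{a,k,j,x}) \, M(R^{(0)}) \;=\; M(R^{(1)}) \, M(R^{(2)}) \;+\; M(R^{(3)}) \, M(R^{(4)}),
\]
where each $R^{(i)}$, after removing forced lozenges, is a member of the family $\{R_{a',k',j',x'}, R'_{a',k',j',x'}\}$ with strictly smaller $a'+k'$. This is precisely the role of the weighted companion region $R'_{a,k,j,x}$: Kuo's identity does not close inside the unweighted family alone, because removing certain boundary vertices converts part of the zigzag into a symmetric cut whose vertical ``bump'' lozenges acquire a natural weight of $\tfrac{1}{2}$. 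The induction must therefore be run simultaneously on $M(R)$ and $M(R')$, with a parallel Kuo recurrence, and both formulas verified together.

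The main obstacle will be twofold. First, locating a single quadruple of boundary vertices that produces a clean recurrence closing inside $\{R, R'\}$ for the full range $1 \le j \le a+k+1$ is delicate: the natural choice that works near $j=a+k$ (as in the Ciucu--Fischer argument) must be modified near the other extreme, and every subregion must be reidentified as a member of the family after peeling off forced lozenges. Second, once the recurrence is in hand, verifying algebraically that the closed form in the statement satisfies it is substantial: the piecewise-linear exponents $f_a(n) = \tfrac{a+1}{2} - |\tfrac{a+1}{2} - n|$ and $f_{a-1}(n)$ shift differently under $a \mapsto a\pm 1$ and $k \mapsto k\pm 1$, so the verification splits into cases by the parities of $a$ and of $j-k$. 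The bulk of the work is then a careful ratio computation showing that the four hyperfactorial-type products in the theorem telescope correctly against the four terms produced by Kuo's identity, reducing the inductive step to an elementary rational identity in $x$.
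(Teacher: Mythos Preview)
Your central structural claim is mistaken: you assert that ``Kuo's identity does not close inside the unweighted family alone'' and that the induction must be run simultaneously on $M(R)$ and $M(R')$. In fact the paper chooses a quadruple $t,u,v,w$ lying entirely along the northeast side and the adjacent corners (none on the west zigzag), and the resulting Kuo recurrence
\[
M(R_{a,k,j,x})M(R_{a-1,k-1,j-1,x+1})=M(R_{a,k-1,j-1,x+1})M(R_{a-1,k,j,x})+M(R_{a+1,k-1,j+1,x})M(R_{a-2,k,j-2,x+1})
\]
involves only unweighted $R$-regions. The weighted family $R'$ is handled by an \emph{independent} and completely parallel argument; the two families never interact. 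Placing vertices on the zigzag, as you propose, is exactly what would drag the weighted regions into the picture and should be avoided.

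Your proposed induction scheme also has problems. The quantity $a+k$ does not strictly decrease across all six terms of the recurrence above (the term $R_{a+1,k-1,j+1,x}$ has the same $a+k$), so induction on $a+k$ alone will not terminate; the paper instead isolates $M(R_{a+1,k-1,j+1,x})$ and inducts on the $a$-index. Your base cases are likewise off: $j=a+k$ is not a base case for any induction on $a+k$, and the paper instead dispatches $j=k$ and $j=k+1$ by a graph-splitting lemma (the region factors into two Proctor pieces), then checks $a=0,1,2$ directly. Finally, the algebraic verification in the paper does not split into parity cases as you anticipate; after dividing corresponding factors term-by-term, the identity reduces to a single elementary equality valid for all parities.
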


\begin{theorem}\label{weightedrakjxthmexplicit}
\begin{multline*}
M(R'_{a,k,j,x})=\left[\prod_{n=1}^a\left(\frac{2x+2k+2n-1}{2k+2n-1}\right)^{f_{a}(n)}\right]\left[\prod_{n=1}^{a-1}\left(\frac{x+k+n}{k+n}\right)^{f_{a-1}(n)}\right]\\
\times\left[\prod_{n=1}^{j-k-1}\left(\frac{2k+2n-1}{2x+2k+2n-1}\right)^{f_{j-k-1}(n)}\right]\left[\prod_{n=1}^{j-k-2}\left(\frac{k+n}{x+k+n}\right)^{f_{j-k-2}(n)}\right]\\
\times\left[\prod_{1\leq n\leq m\leq j-1}\frac{2x+n+m-1}{n+m-1}\right]\left[\prod_{n=1}^{a-j+k+1}\frac{k+j+n-1}{j+n-1}\right]\\
\times\frac{1}{2^{a+k}}\prod_{n=1}^{a-j+k+1}\left[\prod_{m=1}^{j-k}\frac{k+n+m-1}{n+m-1}\prod_{m=j-k+1}^{j-k-1+n}\frac{2k+n+m-1}{n+m-1}\right],
\end{multline*}
with $f_a(i)$ as above.
\end{theorem}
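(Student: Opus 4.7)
The plan is to prove Theorem 2.2 by induction, mirroring the Kuo graphical condensation argument used for Theorem 2.1. The only difference between $R_{a,k,j,x}$ and $R'_{a,k,j,x}$ is that the vertical lozenges in the $a+k$ bumps of the zigzag boundary are weighted by $1/2$ in the latter. Since Kuo's condensation applies to any weighted planar bipartite graph, the same choice of four marked boundary vertices that produced a recurrence for $M(R_{a,k,j,x})$ should produce a structurally identical recurrence for $M(R'_{a,k,j,x})$, with the bump weights simply passing through.

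First, I would fix an induction variable (most naturally $x$, since $x=0$ cleanly degenerates several products in the claimed formula) and identify base cases. Natural base cases include $x=0$, $a=0$, and the extreme positions $j=1$ and $j=a+k+1$. In each such case the weighted region either has a trivial tiling or collapses to a shape whose matching generating function is known, either from Proctor's halved-hexagon enumerations or from direct inspection. When the condensation recurrence produces sub-regions with parameters just outside the admissible range $j\in\{1,\ldots,a+k+1\}$, I would record short lemmas showing that the closed-form expression gracefully accommodates these degenerate cases.

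Next, I would apply Kuo condensation to the dual bipartite graph of $R'_{a,k,j,x}$, choosing the four condensation vertices in the same positions as in the proof of Theorem 2.1 (away from the weighted bumps, so that the $\tfrac{1}{2}$ weights are not disturbed). The resulting sub-regions $R'_1,\ldots,R'_5$ are then all of the same type $R'_{a',k',j',x'}$ with parameters shifted by units of $\pm 1$, and the prefactor $\tfrac{1}{2^{a+k}}$ passes through consistently because none of the condensation steps adds or removes bumps. This yields a recurrence of the shape
\[
M(R'_{a,k,j,x})\cdot M(R'_1) \;=\; M(R'_2)\cdot M(R'_3) + M(R'_4)\cdot M(R'_5),
\]
formally identical to the one driving the proof of Theorem 2.1.

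Finally, I would verify that the claimed closed form satisfies this recurrence. The main obstacle is the bookkeeping of the piecewise-linear exponents $f_a(n)=\tfrac{a+1}{2}-|\tfrac{a+1}{2}-n|$: decrementing $a$, $k$, or $j$ shifts the exponents in a way that requires case analysis according to the parities of $a$, $a-1$, $j-k-1$, and $j-k-2$, as well as the sign of $a-j+k$. Organizing these cases so that repeated work is avoided is the bulk of the effort; once done, the remaining manipulations reduce to cancellations among linear factors of the form $x+k+n$ and $2x+2k+2n-1$, and these cancellations are directly parallel to those in Theorem 2.1 up to the half-integer shift ($2k+2n+1 \leftrightarrow 2k+2n-1$) that reflects the factor-of-$\tfrac{1}{2}$ scaling on the bumps and is absorbed into the explicit $\tfrac{1}{2^{a+k}}$ prefactor.
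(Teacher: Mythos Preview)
Your overall strategy is the paper's: apply Kuo condensation at the same four boundary triangles (all away from the weighted bumps, so the $\tfrac12$ weights pass through untouched), obtain the identical six-term recurrence with $R$ replaced by $R'$, and then check that the closed form satisfies it together with base cases. The paper also streamlines the verification by first rewriting the closed form as $\dfrac{Q'_{a,k,x}}{Q'_{a,k,0}}\cdot\dfrac{Q'_{j-k-1,k,0}}{Q'_{j-k-1,k,x}}\cdot M(P'_{j-1,j-1,x})\cdot M(P'_{a-j+k+1,a,k})$, so that the bookkeeping with the exponents $f_a(n)$ is absorbed into the $Q'$-polynomials rather than handled by a parity case split; you may find this repackaging saves most of the work you anticipate.

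There is, however, a real gap in your plan: the induction variable cannot be $x$. With the vertices placed as in the proof of Theorem~2.1, the resulting recurrence is
\[
M(R'_{a,k,j,x})M(R'_{a-1,k-1,j-1,x+1})=M(R'_{a,k-1,j-1,x+1})M(R'_{a-1,k,j,x})+M(R'_{a+1,k-1,j+1,x})M(R'_{a-2,k,j-2,x+1}),
\]
in which three terms carry $x$ and three carry $x+1$; no single term in $x$ (or $x+1$) can be isolated, so knowing the formula for all smaller $x$ does not determine it at $x$. The paper instead solves for $M(R'_{a+1,k-1,j+1,x})$ and inducts on $a$, with base cases $a=0,1,2$ (handled directly), $k=0$ (the region degenerates to a Proctor region), and $j=k,\,k+1$ (handled by the Graph Splitting Lemma, which cuts the region into two Proctor pieces). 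Your proposed base cases $x=0$ and $j=1$ do not serve this induction; in particular $j=1$ is only admissible when $k\le 1$, since $M(R'_{a,k,j,x})=0$ for $j<k$. Once you switch the induction to $a$ and use the $j=k,k+1$ base cases, the rest of your outline goes through exactly as in the unweighted case.
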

 
 Using \Cref{rakjxthmexplicit,weightedrakjxthmexplicit}, we can also prove a formula for the number of tilings of the region described below.  
 
 Consider a hexagon with side-lengths $b, c+2k, c, b+2k, c, c+2k$ (again, starting with the north side). We must remove $2k$ upward-pointing unit triangles to balance the region (or we may remove triangles of both types, but with $2k$ greater upward-pointing ones).  We will remove $k$ consecutive unit triangles from the northeast side and the corresponding ones from the northwest side.  Denote such a region by $DDH_{b,c,2k,j}$.  The index $j$ tells us the precise location of the removed unit triangles, just as in the region $R_{a,k,j,x}$.  Figure~\ref{ddh4763} shows an example of such a region, with $b=4, c=7, k=3,\textrm{ and } j=3$.

  \begin{figure}
 \centering
\includegraphics[height=3.2in]{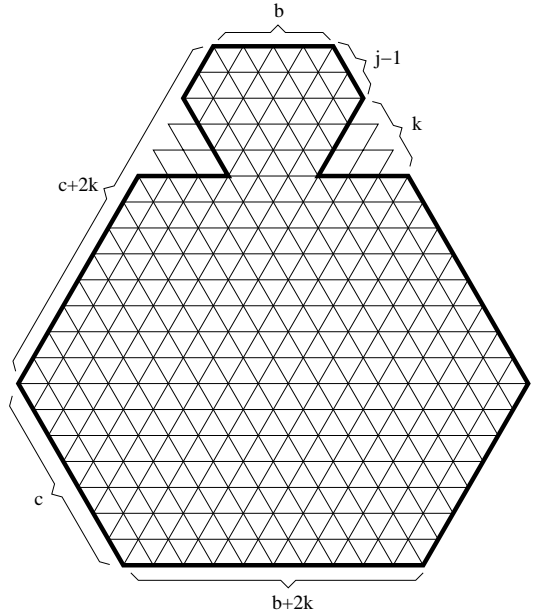}
\caption{The $k$ removed unit triangles on the northeast and northwest sides leave behind ``spikes."  In any lozenge tiling of $DDH(b,c,2k,j)$, there is only one way to tile these.}
\label{ddh4763}
\end{figure}

Though it can be much more widely applied, Ciucu's factorization theorem from \cite{ciucu97} provides a method to enumerate the tilings of a symmetric region on the triangular lattice by computing the matching generating functions of one weighted and one unweighted subregion induced by cutting the region in half.
We apply the theorem to $DDH$ regions as in Figure~\ref{factthmapplied}.  This immediately gives the following corollary, which is a generalization of a special case of a result of Lai \cite{lai}.  We have called the $R$- and $R'$-type regions ``halved hexagons with boundary defects" since that is exactly the role they play in determining the number of tilings of $DDH$ regions.

 \begin{figure}
\centering
\includegraphics[height=2in]{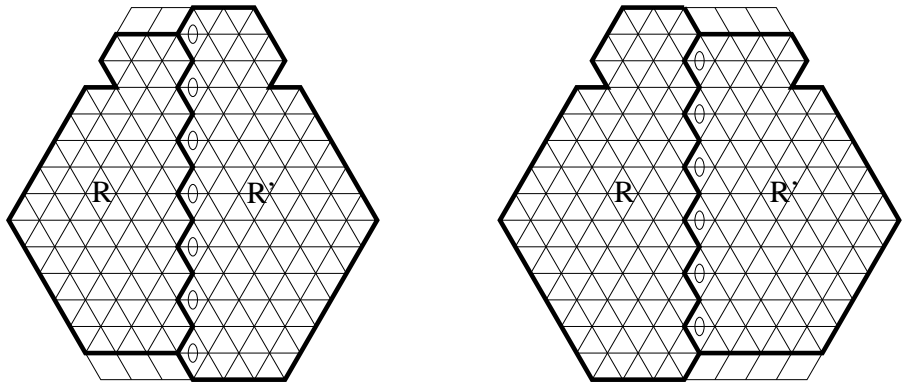}
\caption{The subregions $R$ and $R'$, after forcing, obtained by applying the factorization theorem to a $DDH$ region with $b$ even (left) and $b$ odd (right).}
\label{factthmapplied}
\end{figure}

\begin{cor}
\label{mainthm}
\begin{equation}
M(DDH_{b,c,2k,j})=
\begin{cases} 2^{c+k} M(R_{c-1,k,j-1,b/2}) M(R'_{c,k,j,b/2}) & \text{if $b$ is even,} \\
2^{c+k} M(R_{c,k,j,(b-1)/2})M(R'_{c-1,k,j-1,(b+1)/2}) &\text{if $b$ is odd.}
\end{cases}
\end{equation}
\end{cor}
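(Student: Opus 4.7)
The plan is to apply Ciucu's factorization theorem from \cite{ciucu97} directly to $DDH_{b,c,2k,j}$. The region is symmetric about the vertical axis through the midpoints of its north and south edges: the horizontal side-length data $c+2k, c, c+2k$ are identical on the left and on the right, and the $k$ consecutive unit triangles removed on the northeast side are matched by the symmetric $k$ removed on the northwest side. Thus $DDH_{b,c,2k,j}$ is a valid input to the factorization theorem, and it suffices to identify the two halves that appear after cutting along the axis.

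Next I would describe those halves. When $b$ is even, the axis passes through lattice vertices and splits the north edge (length $b$) into two pieces of length $b/2$ and the south edge (length $b+2k$) into two pieces of length $b/2+k$. The right half therefore has north edge $b/2$, northeast edge $c+2k$, southeast edge $c$, south edge $b/2+k$, a zigzag boundary on the west produced by the cut, and a boundary defect inherited from the $k$ removed triangles. After forcing the lozenges induced by the spikes on the northeast, this is exactly the shape $R_{c,k,j,b/2}$; in Ciucu's decomposition, one half has its axis lozenges weighted by $\tfrac{1}{2}$, which is precisely the weighting in $R'_{c,k,j,b/2}$. The other half is one row shallower along the axis (because of the way the zigzag axis meets upward- versus downward-pointing triangles), and working through this shift reduces both the $a$ and the $j$ parameters by $1$, yielding the unweighted $R_{c-1,k,j-1,b/2}$. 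When $b$ is odd, the axis instead passes through midpoints of horizontal unit edges, so the two halves inherit north edges of lengths $(b-1)/2$ and $(b+1)/2$; the same analysis, tracking which half is the weighted one, produces $R_{c,k,j,(b-1)/2}$ unweighted and $R'_{c-1,k,j-1,(b+1)/2}$ weighted.

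Finally, I would account for the prefactor. Ciucu's theorem introduces a power of $2$ equal to the number of left-pointing lozenges that straddle the axis, which in our setup is the number of bumps in the zigzag produced by the cut, namely $c+k$. This is the source of the $2^{c+k}$ in the corollary.

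The main obstacle is not conceptual but a matter of careful bookkeeping: verifying in each parity case that the boundary defect lands at the correct position ($j$ on one half, $j-1$ on the other), that the zigzag produced by the cut coincides with the zigzag in the definition of $R$ and $R'$, and that the vertical lozenges which acquire weight $\tfrac{1}{2}$ under the factorization theorem are exactly those marked with ovals in Figure~\ref{rak}(b). Figure~\ref{factthmapplied} already exhibits the correct identification in both parity cases, so once these three checks are carried out the corollary is immediate.
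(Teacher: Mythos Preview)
Your proposal is correct and follows exactly the approach the paper takes: the paper simply states that applying Ciucu's factorization theorem to $DDH_{b,c,2k,j}$ as in Figure~\ref{factthmapplied} ``immediately gives'' the corollary, and your write-up supplies precisely the bookkeeping (identifying the two halves as $R$- and $R'$-regions in each parity case and tracing the power of $2$) that the paper leaves implicit in the figure.
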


 \section{Preliminaries}

 The \textit{dual graph} of a region $R$ is the graph comprising one vertex for each unit triangle in $R$.  Two vertices share an edge in the dual graph if and only if their corresponding unit triangles are edge-adjacent.  An edge in the dual graph has weight $w$ precisely if the corresponding lozenge in $R$ also did.  For regions on the triangular lattice, we've seen that each unit triangle is either pointing upwards or downwards - in particular, there are two types of unit triangles.  The resulting graph is now bipartite, and lozenge tilings of a region $R$ are clearly in one-to-one correspondence with perfect matchings of the bipartite dual graph (for a weighted region, the matching generating functions coincide).
 
In \cite{ciuculai14}, Ciucu and Lai give conditions under which the matching generating function of a bipartite graph is the product of the matching generating function of two induced subgraphs.  We extend it slightly.  

\begin{lemma}[Graph Splitting Lemma]\label{graphsplit}
Let $G=(V_1,V_2,E)$ be a bipartite graph.  Assume $H$ is an induced subgraph of $G$ that satisfies the following condition:
\begin{enumerate}[(i)]
\item (Separating Condition) There are no edges of $G$ connecting a vertex in $V(H)\cap V_1$ and a vertex in $V(G-H)$.
\item $|V(H)\cap V_1|\geq|V(H)\cap V_2|$.
\end{enumerate}
Then $$M(G)=M(H)M(G-H).$$
\end{lemma}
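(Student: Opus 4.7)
The plan is to show that the separating condition rigidly confines the matching edges incident to $V(H)\cap V_1$ to lie inside $H$, after which the identity follows by decomposing any perfect matching of $G$ into a piece in $H$ and a piece in $G-H$. First I observe that, because $G$ is bipartite, every edge of $G$ incident to a vertex of $V(H)\cap V_1$ has its other endpoint in $V_2$; by condition (i) that endpoint must in fact lie in $V(H)\cap V_2$. Hence in any perfect matching $\mu$ of $G$, each vertex of $V(H)\cap V_1$ is matched, via an edge of $H$, to some vertex of $V(H)\cap V_2$.

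Next I would split on condition (ii). If the inequality is strict, then $H$ has unequal bipartition classes, so $M(H)=0$, while the observation above forbids any perfect matching of $G$ from existing, so $M(G)=0$ as well; the claimed identity then reduces to $0 = 0\cdot M(G-H)$. If instead $|V(H)\cap V_1|=|V(H)\cap V_2|$, the matching edges guaranteed above already account for every vertex of $V(H)\cap V_2$, so the restriction of $\mu$ to $H$ is a perfect matching $\mu_H$ of $H$; the remaining edges of $\mu$ touch only $V(G-H)$ and constitute a perfect matching $\mu'$ of $G-H$.

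Conversely, the union of any perfect matching of $H$ with any perfect matching of $G-H$ is a perfect matching of $G$, and this construction inverts the restriction above, giving a bijection between perfect matchings of $G$ and pairs $(\mu_H,\mu')$. Because the weight of a matching is the product of its edge-weights and the two edge-sets are disjoint, $\operatorname{wt}(\mu)=\operatorname{wt}(\mu_H)\cdot\operatorname{wt}(\mu')$; summing over all pairs yields $M(G)=M(H)\,M(G-H)$.

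The only real subtlety is recognizing why hypothesis (ii) cannot be dropped: without it one could have $|V(H)\cap V_1|<|V(H)\cap V_2|$, in which case a vertex of $V(H)\cap V_2$ could be matched (using a $G$-edge not forbidden by the separating condition) to a vertex of $V(G-H)\cap V_1$, so the restriction of $\mu$ to $H$ would fail to be a perfect matching and the clean split would break. Apart from this, the proof is essentially bookkeeping.
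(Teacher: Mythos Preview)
Your proof is correct and follows the same idea as the paper's: use the separating condition to force every vertex of $V(H)\cap V_1$ to be matched inside $H$, then split on whether the inequality in (ii) is strict. The only difference is that the paper handles the equality case by citing \cite{ciuculai14}, whereas you write out the bijection and weight-multiplicativity argument explicitly; your version is thus more self-contained but otherwise identical in spirit.
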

 \begin{proof}
 If $|V(H)\cap V_1|=|V(H)\cap V_2|$, then \cite{ciuculai14} provides the proof.
 Suppose $|V(H)\cap V_1|>|V(H)\cap V_2|$, in which case $M(H)=0.$  We must show $M(G)=0.$  By the separating condition, there is no edge in $G$ connecting a vertex in  $V(H)\cap V_1$ to a vertex in $V(G-H)$.  In a perfect matching of $G$, every vertex in $V(H)\cap V_1$ must then be connected to a vertex in $V(H)\cap V_2$, but there are not enough such vertices by assumption.  Hence $M(G)=0.$
 \end{proof}
 
We say that we \textit{cut} a graph (or region) into subgraphs (or subregions).

Although it is more general, Kuo's graphical condensation method \cite{kuo} can be used to count matchings of bipartite graphs.  There are several versions; the one we will use is stated below.

\begin{theorem}\label{kuograph}
 Let $G=(V_1,V_2,E)$ be a plane bipartite graph with $|V_1|=|V_2|+1,$ and suppose that vertices $t,u,v,\textrm{and }w$ appear cyclically on a face of $G$.  If $t,u,v\in V_1$ and $w\in V_2$, then 
 \begin{multline*}
 M(G-u)M(G-\{t,v,w\})=\\M(G-t)M(G-\{u,v,w\})+M(G-v)M(G-\{t,u,w\}).
 \end{multline*}
 \end{theorem}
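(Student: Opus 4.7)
The plan is to prove the identity by constructing a weight-preserving bijection between the pair-of-matchings interpretations of the two sides, in the spirit of the standard proofs of Kuo condensation. The left-hand side $M(G-u)\,M(G-\{t,v,w\})$ is the weighted sum over pairs $(M_1,M_2)\in M(G-u)\times M(G-\{t,v,w\})$, and the right-hand side is the analogous weighted sum over the disjoint union of $M(G-t)\times M(G-\{u,v,w\})$ and $M(G-v)\times M(G-\{t,u,w\})$. The bijection will be constructed via the standard superposition-and-swap technique.

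Given a pair $(M_1,M_2)$ on the left, I form the multiset $M_1\cup M_2$. By construction $u$ is covered only by $M_2$, each of $t,v,w$ is covered only by $M_1$, and every other vertex is covered exactly twice. Hence the superposition decomposes into a disjoint union of alternating cycles on doubly covered vertices together with two alternating paths whose endpoints pair up $\{t,u,v,w\}$. Of the three possible pairings I claim only the non-crossing pairings $\{t,u\}\{v,w\}$ and $\{u,v\}\{t,w\}$ can occur, because the ``crossing'' pairing $\{t,v\}\{u,w\}$ is ruled out by bipartite parity: since $t,v\in V_1$ are both covered only by $M_1$, a $t$-$v$ path would have to begin and end with $M_1$-edges and hence contain an odd number of alternating edges, contradicting that any $V_1$-$V_1$ path in a bipartite graph has even length.

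In the first case I swap the $M_1,M_2$ labels along the $t$-$u$ path: every $M_1$-edge of that path is reassigned to $M_2$ and every $M_2$-edge to $M_1$. A direct check shows this produces a pair $(M_1',M_2')\in M(G-t)\times M(G-\{u,v,w\})$, and since the total multiset of edges used is unchanged, the product of edge weights is preserved. In the second case, the analogous swap along the $u$-$v$ path yields a pair in $M(G-v)\times M(G-\{t,u,w\})$. To invert, I apply the same superposition-and-swap procedure to any right-hand-side pair, swapping along the unique path that contains $u$. The main obstacle is verifying invertibility: on the right-hand side bipartite parity no longer eliminates the crossing pairing, and one must instead use the planarity of $G$ together with the cyclic face order of $t,u,v,w$. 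By a Jordan curve argument, a hypothetical $t$-$v$ path in $G$ together with an arc joining $t$ to $v$ inside the common face would form a closed curve separating $u$ from $w$, so any $u$-$w$ path in $G$ would have to cross the $t$-$v$ path at a vertex, contradicting the vertex-disjointness of the two paths in the superposition. This closes off the crossing pairing on the right-hand side and confirms that the inverse is well-defined, finishing the bijection.
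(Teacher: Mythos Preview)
The paper does not supply its own proof of this statement; it is quoted as a known result of Kuo \cite{kuo} and used as a tool. Your superposition-and-swap argument is the standard proof, essentially the one in Kuo's original paper, and it is correct in outline.

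One point deserves an extra sentence of care. You correctly observe that on the right-hand side bipartite parity alone does not kill the crossing pairing $\{t,v\}\{u,w\}$, and you invoke the Jordan-curve argument for that. But to know that swapping along the $u$-path in a pair from $M(G-t)\times M(G-\{u,v,w\})$ actually lands back in $M(G-u)\times M(G-\{t,v,w\})$, you also need $u$'s partner there to be $t$ rather than $v$; this is again a parity obstruction (in that term both $u$ and $v$ are covered only by $M_1'$, which would force an odd-length alternating path between two vertices of $V_1$), and the analogous check handles the second right-hand term. With that one extra line the inverse is fully determined and the bijection is complete.
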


The matching generating function for $R_{a,k,j,x}$, is very closely related to the number of tilings of a hexagon with side-lengths $c,a,b,c,a,b$ with a ``maximal staircase" removed.  We call such a region $P_{a,b,c}$ (see Figure~\ref{proctor}).  Here is the classical result due to Proctor \cite{proctor}.

\begin{theorem}[Proctor \cite{proctor}]\label{proctorthm}
For any non-negative integers $a,b,$ and $c$ with $a\leq b$, we have $$M(P_{a,b,c})=\prod_{i=1}^a \left[\prod_{j=1}^{b-a+1}\frac{c+i+j-1}{i+j-1}\prod_{j=b-a+2}^{b-a+i}\frac{2c+i+j-1}{i+j-1}\right],$$ where empty products are taken to be $1$.  Further, $M(P_{b+1,b,c})=M(P_{b,b,c}).$
\end{theorem}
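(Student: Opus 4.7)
The plan is to prove \Cref{proctorthm} by induction using Kuo's graphical condensation method (\Cref{kuograph}), in the same spirit as the approach the paper develops for $R_{a,k,j,x}$. I would first re-encode a tiling of $P_{a,b,c}$ as a perfect matching of its bipartite dual graph $G_{a,b,c}$, then choose four vertices $t, u, v, w$ lying cyclically on a face of $G_{a,b,c}$ so that after applying \Cref{kuograph} and ``forcing'' the lozenges that are determined once the corresponding unit triangles are removed, each of the six graphs $G-u$, $G-\{t,v,w\}$, etc.\ becomes (up to overall factors arising from forced lozenges) the dual of a Proctor region $P_{a',b',c'}$ with $(a',b',c')$ obtained from $(a,b,c)$ by decrementing exactly one coordinate. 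The natural choice is to place $u$ near the notch left by the staircase and the other three vertices near the corners of the underlying hexagon, reducing each of $a,b,c$ in turn.

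Next, I would verify the base cases. When $a=0$, $P_{0,b,c}$ is a hexagon with sides $c,0,b,c,0,b$, whose tiling count collapses to the specialization of MacMahon's formula~\eqref{macmahonthm}, which one checks matches the product in \Cref{proctorthm}. When $c=0$ the region degenerates into a parallelogram-with-staircase that is easy to count directly. The boundary identity $M(P_{b+1,b,c})=M(P_{b,b,c})$ should then drop out of the $a=b+1$ case of the recurrence, since adding one more row to the staircase forces a strip of lozenges and identifies the two regions. The remaining step is to show that the displayed product formula satisfies the three-term recurrence from \Cref{kuograph}; after cancelling common hyperfactorial-like factors this reduces to an elementary rational identity in $a,b,c$, which one checks by writing each ratio in a uniform form.

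The main obstacle, as with most Kuo-condensation arguments, is the first step: choosing $t,u,v,w$ so that each of the six deleted subregions really is of Proctor type. It is quite possible that the family $\{P_{a,b,c}\}$ does not close up under all the required deletions, in which case one must first enlarge it (for instance by allowing the staircase to be truncated at a variable height, or by allowing an auxiliary boundary defect), prove the product formula for the larger family by a simultaneous induction on all parameters, and then specialize. This is precisely the role the paper's two-parameter family $R_{a,k,j,x}$ plays with respect to the $k=0$ symmetric case, and I would expect a similar enlargement to be the technical heart of the argument here.
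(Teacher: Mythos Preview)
The paper does not prove \Cref{proctorthm}; it is quoted verbatim from Proctor's work \cite{proctor} and used as a black box throughout. There is therefore nothing in the paper to compare your proposal against.

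As for the proposal itself: it is a plan, not a proof. You correctly identify the crux --- whether one can pick $t,u,v,w$ so that all six deleted subregions are again of the form $P_{a',b',c'}$ --- and then immediately concede that this may fail and that an enlargement of the family might be needed. That is an honest assessment, but it leaves the actual content of the argument unspecified: you neither exhibit a working choice of $t,u,v,w$ nor name the enlarged family, so nothing has been established. Note too that your base case $a=0$ is trivial (the region is a single point, not a hexagon requiring MacMahon's formula), and the identity $M(P_{b+1,b,c})=M(P_{b,b,c})$ is a one-line forcing argument independent of any recurrence: when $a=b+1$ the bottom row of the staircase protrudes and forces a strip of lozenges, leaving exactly $P_{b,b,c}$. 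If you want a self-contained Kuo-condensation proof of Proctor's formula, you will need to actually carry out the vertex placement and verify the resulting three-term identity, or else cite one of the existing proofs in the literature.
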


\begin{figure}
\centering
\includegraphics[height=2.5in]{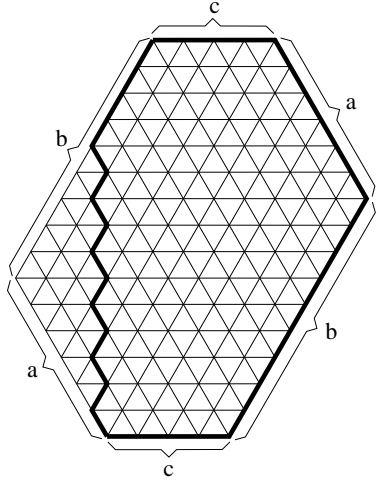}
\caption{The region $P_{a,b,c}$ for $a=6,b=9,\textrm{and } c=4.$}
\label{proctor}
\end{figure}

We will make use of the following corollary, in which $a=b$.  

\begin{cor}
For any non-negative integers $a$ and $c$ we have $$M(P_{a,a,c})=\prod_{i=1}^{a}\frac{c+i}{2c+i}\prod_{1\leq i\leq j\leq a} \frac{2c+i+j-1}{i+j-1}.$$  
\end{cor}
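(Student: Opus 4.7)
The plan is to derive the corollary by specializing Proctor's formula to the case $a = b$ and then massaging the inner product so that it takes the symmetric form advertised in the statement. Setting $b = a$ in \Cref{proctorthm}, the first inner product $\prod_{j=1}^{b-a+1}$ collapses to the single term $j=1$, which contributes $\frac{c+i}{i}$, and the upper limit $b-a+i$ of the second inner product becomes $i$. This yields
\[
M(P_{a,a,c}) = \prod_{i=1}^a \frac{c+i}{i} \prod_{j=2}^{i} \frac{2c+i+j-1}{i+j-1},
\]
with the convention that the inner product is empty (hence $1$) when $i=1$.

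The key step is to extend the inner product down to $j=1$. The missing factor at $j=1$ is $\frac{2c+i}{i}$, so multiplying and dividing by it gives
\[
\prod_{j=2}^{i} \frac{2c+i+j-1}{i+j-1} = \frac{i}{2c+i}\prod_{j=1}^{i}\frac{2c+i+j-1}{i+j-1}.
\]
Substituting this back produces the prefactor $\prod_{i=1}^{a}\frac{c+i}{i}\cdot\frac{i}{2c+i}=\prod_{i=1}^{a}\frac{c+i}{2c+i}$ demanded by the target formula, and leaves the double product $\prod_{i=1}^{a}\prod_{j=1}^{i}\frac{2c+i+j-1}{i+j-1} = \prod_{1\le j\le i\le a}\frac{2c+i+j-1}{i+j-1}$.

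To finish, I would invoke the symmetry of the summand $\frac{2c+i+j-1}{i+j-1}$ under $i\leftrightarrow j$: renaming the dummy indices rewrites the product over $1\le j\le i\le a$ as the product over $1\le i\le j\le a$, giving exactly the claimed expression. The manipulation is purely bookkeeping; the only thing to watch is the degenerate case $i=1$ (empty inner product), which is handled correctly by the identity $\frac{1}{2c+1}\cdot\frac{2c+1}{1}=1$, and the edge case $a=0$, where all products are empty and both sides equal $1$. I do not anticipate any obstacle beyond verifying these index boundaries.
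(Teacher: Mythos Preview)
Your proposal is correct. The paper states this corollary without proof, treating it as an immediate specialization of \Cref{proctorthm} with $a=b$; your argument supplies exactly the routine index manipulation that the paper omits, so the approaches coincide.
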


The family of regions $R_{a,k,j,x}$ extends the $P_{a,a,c}$ family: it is clear that $R_{a,0,j,x}=P_{a,a,x}.$

 The matching generating function for $R'_{a,k,j,x}$ requires a weighted version of \Cref{proctorthm}.  For the region $P'_{a,b,c}$, each lozenge that is part of the ``maximal staircase" has weight $\frac{1}{2}$, while the rest are unweighted (see Figure~\ref{proctorweight}). 
 
 \begin{figure}[ht]
 \centering
 \includegraphics[height=2.5in]{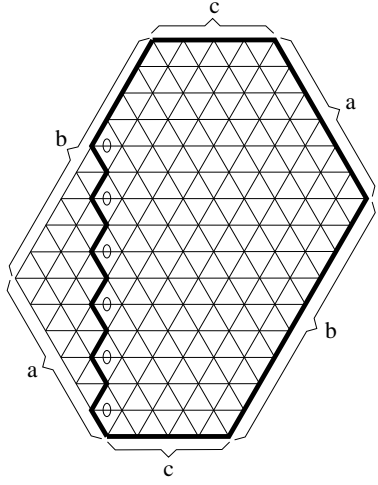}
 \caption{The region $P'_{a,b,c}$ has weighted lozenges along its west side.}
 \label{proctorweight}
 \end{figure}

In \cite{ck}, Ciucu calculated the matching generating function of a family of regions which include these weighted Proctor regions.

\begin{cor}
For any non-negative integers $a,b,$ and $c$ with $a\leq b$,  $$M(P'_{a,b,c})=\frac{M(P_{a,b,c})}{2^a}\cdot\prod_{i=1}^a\frac{2c+b-a+i}{c+b-a+i}.$$  As in the case of \Cref{proctorthm}, $M(P'_{b+1,b,c})=M(P'_{b,b,c}).$
\end{cor}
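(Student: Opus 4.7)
The plan is to extract the formula for $M(P'_{a,b,c})$ as a direct specialization of the main enumeration result of Ciucu in \cite{ck}, where a whole family of weighted cored-hexagon regions is computed in closed product form. The sentence preceding this corollary signals exactly this strategy, so the first step is simply to identify the family in \cite{ck} and match its parameters so that the region reduces to $P'_{a,b,c}$, with the weighted boundary in Ciucu's setup playing the role of the staircase side of $P_{a,b,c}$ on which the vertical lozenges carry weight $\tfrac{1}{2}$.

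After specializing, I would simplify the resulting product in two stages. The factor $1/2^a$ is the elementary contribution of the $a$ weighted vertical lozenges along the west side and I would peel it off first. The remaining rational function of $a,b,c$ should decompose naturally as $M(P_{a,b,c})$, whose closed form is supplied by \Cref{proctorthm}, multiplied by a single product of linear factors. The correction $\prod_{i=1}^{a}\frac{2c+b-a+i}{c+b-a+i}$ should then emerge after telescoping the hyperfactorial-style terms in Ciucu's formula against those of \Cref{proctorthm}; this is the routine but essential algebraic step.

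For the boundary identity $M(P'_{b+1,b,c})=M(P'_{b,b,c})$, I would argue exactly as in the unweighted Proctor case: the region $P_{b+1,b,c}$ contains an extra strip whose tiling is entirely forced, and since none of the forced lozenges in this strip coincide with the vertical lozenges along the weighted staircase side, removing the forced strip leaves $P'_{b,b,c}$ with its weighting intact. Hence the weighted matching generating functions agree just as the unweighted ones do.

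The main obstacle will be purely bookkeeping: matching Ciucu's parametrization in \cite{ck} to the present $P'_{a,b,c}$, and verifying that the specialized formula telescopes cleanly to the claimed product. The algebra is straightforward in principle but easy to stumble on through off-by-one indexing on the staircase side, especially in the exceptional case $a=b+1$ where one must be careful that the forced strip does not touch the weighted lozenges.
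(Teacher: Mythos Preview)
Your proposal is correct and matches the paper's approach: the paper offers no proof of this corollary beyond the sentence preceding it, which says the formula is a specialization of the family computed by Ciucu (and Krattenthaler) in \cite{ck}. You have simply spelled out the steps implicit in that citation---identify the appropriate region in \cite{ck}, specialize parameters, and telescope against \Cref{proctorthm}---together with the forced-lozenge argument for the boundary identity $M(P'_{b+1,b,c})=M(P'_{b,b,c})$, which the paper likewise leaves implicit by the phrase ``As in the case of \Cref{proctorthm}.''
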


 We end this section with two more definitions.  For integers $a,k,$ and $x$ define 
 $$Q_{a,k,x}:=\prod_{i=1}^a (x+k+i)^{f_a(i)} \prod_{i=1}^{a-1} (2x+2k+2i+1)^{f_{a-1}(i)},$$ where empty products are again taken to be $1$.  We also define 
 $$Q'_{a,k,x}:=\prod_{i=1}^a(2x+2k+2i-1)^{f_a(i)}\prod_{i=1}^{a-1}(x+k+i)^{f_{a-1}(i)}.$$ 

\section{Proofs of \texorpdfstring{\Cref{rakjxthmexplicit,weightedrakjxthmexplicit}}{}}

We can rewrite \Cref{rakjxthmexplicit,weightedrakjxthmexplicit} as follows.

\begin{theorem}\label{rakjxthm}
For $a,k,\textrm{and } x$ non-negative integers and $j\in\{k,k+1,\ldots,\\a+k+1\}$, 
$$M(R_{a,k,j,x})=\frac{Q_{a,k,x}}{Q_{a,k,0}}\cdot\frac{Q_{j-k-1,k,0}}{Q_{j-k-1,k,x}}\cdot M(P_{j-1,j-1,x})\cdot M(P_{a-j+k+1,a,k}).$$
\end{theorem}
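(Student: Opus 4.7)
My plan is to prove Theorem \ref{rakjxthm} by induction on $x$ via Kuo's graphical condensation (Theorem \ref{kuograph}). Passing to the bipartite dual graph $G$ of $R_{a,k,j,x}$, I would apply the condensation identity to four vertices $t,u,v,w$ chosen on the outer face at unit triangles sitting just inside carefully selected corners of the region (natural candidates being the north/northeast, northeast/southeast, and southeast/south corners, together with one bump along the zigzag boundary). Removing any single one of these vertices, or any triple, forces a cascade of lozenges along the affected boundary; after this forcing, the Graph Splitting Lemma (Lemma \ref{graphsplit}) should separate the residual graph into a shifted copy of some $R_{a',k',j',x'}$ together with (at worst) a Proctor region, whose tiling count is supplied by Theorem \ref{proctorthm}. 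The output of Theorem \ref{kuograph} is then a three-term recurrence of the shape
\begin{equation*}
M(R_{a,k,j,x})\,M(R_{(1)}) = M(R_{(2)})\,M(R_{(3)}) + M(R_{(4)})\,M(R_{(5)}),
\end{equation*}
in which each auxiliary $R_{(i)}$ differs from $R_{a,k,j,x}$ by unit shifts in $x$, $a$, or $j$.

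Next I would verify the base cases. The case $k=0$ collapses to $R_{a,0,j,x}=P_{a,a,x}$ (the value of $j$ being irrelevant since no triangle is removed), and one can check directly via Proctor's corollary that
\[
\frac{Q_{a,0,x}}{Q_{a,0,0}}\cdot\frac{Q_{j-1,0,0}}{Q_{j-1,0,x}}\cdot M(P_{j-1,j-1,x})\cdot M(P_{a-j+1,a,0})=M(P_{a,a,x}).
\]
The case $x=0$ forces most of the western portion of the region, reducing the count to a lower-dimensional Proctor computation that matches the right-hand side after the $Q$-ratios collapse. Finally, the extremal value $j=a+k+1$ recovers the Ciucu--Fischer region of \cite{ciucufischer13}, which provides a further independent check.

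The inductive step then requires showing that the product on the right-hand side of Theorem \ref{rakjxthm} satisfies the Kuo recurrence. Because $Q_{a,k,x}$, $M(P_{j-1,j-1,x})$, and $M(P_{a-j+k+1,a,k})$ are all products of linear factors in $a,k,j,x$, each ratio appearing in the recurrence simplifies to a bounded number of linear factors, so the identity to be checked becomes a polynomial equation verifiable factor by factor. The main obstacle I anticipate is geometric rather than algebraic: the four Kuo vertices must be placed so that after forcing each of the six induced subregions factors cleanly via Lemma \ref{graphsplit} into pure $R$- or $P$-type pieces (rather than producing a new family of shapes), and so that the induced parameter shifts keep $j'$ inside the admissible range $\{k,\ldots,a+k+1\}$ where the induction hypothesis applies. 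Once a good configuration of $t,u,v,w$ is identified, the remaining work reduces to careful but routine hyperfactorial bookkeeping.
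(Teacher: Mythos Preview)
Your overall strategy---Kuo condensation plus verification that the closed form satisfies the resulting three-term recurrence---is exactly what the paper does, but several of the specifics you have guessed at diverge from what actually works. First, the induction is on $a$, not on $x$: the recurrence the paper obtains,
\[
M(R_{a,k,j,x})M(R_{a-1,k-1,j-1,x+1})=M(R_{a,k-1,j-1,x+1})M(R_{a-1,k,j,x})+M(R_{a+1,k-1,j+1,x})M(R_{a-2,k,j-2,x+1}),
\]
mixes the values $x$ and $x+1$ freely, so it does not support an induction on $x$; solving instead for the term with first index $a+1$ lets one induct on $a$. Second---and this is the geometric insight you flag as the obstacle---Kuo's theorem is \emph{not} applied to the dual of $R_{a,k,j,x}$ itself. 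One instead starts from the region in which only $k-1$ (rather than $k$) consecutive up-triangles are removed, beginning at position $j+1$; this graph has $|V_1|=|V_2|+1$ as Theorem~\ref{kuograph} requires, and the vertex $u$ is placed at the unremoved triangle in position $j$ so that $G-u$ recovers $R_{a,k,j,x}$. The remaining vertices $t,v,w$ sit at the top corner, the bottom of the defect, and the southeast corner, and the forcing after each deletion yields a pure $R$-region in every one of the six cases---no Proctor pieces and no appeal to Lemma~\ref{graphsplit} are needed at this stage.

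Your proposed base cases also need revision. The case $x=0$ is not used; instead the paper anchors the induction at $j=k$ and $j=k+1$ (where Lemma~\ref{graphsplit} cleanly splits the region into two Proctor pieces along the south edge of the defect), together with the small cases $a\le 2$. The $k=0$ reduction to $P_{a,a,x}$ that you mention is correct and is handled separately. Once the recurrence and these base cases are in place, the algebraic verification proceeds along the lines you describe: each of the six terms factors into two Proctor counts and four $Q$-ratios, and after dividing through by common factors the identity collapses to a quadratic relation in $a,k,j$ that one checks by hand.
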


\begin{theorem}\label{weightedrakjxthm}
For $a,k,\textrm{and } x$ non-negative integers and $j\in\{k,k+1,\ldots,\\a+k+1\}$, 
$$M(R'_{a,k,j,x})=\frac{Q'_{a,k,x}}{Q'_{a,k,0}}\cdot\frac{Q'_{j-k-1,k,0}}{Q'_{j-k-1,k,x}}\cdot M(P'_{j-1,j-1,x})\cdot M(P'_{a-j+k+1,a,k}).$$
\end{theorem}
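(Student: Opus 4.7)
The plan is to run a Kuo-condensation induction analogous to the one that must prove the unweighted counterpart \Cref{rakjxthm}. On the bipartite dual graph of $R'_{a,k,j,x}$ I would pick four vertices $t,u,v,w$ lying cyclically on a single boundary face, with $t,u,v$ in one color class and $w$ in the other, placed near corners of the region (for instance, at the meeting points of the zigzag side with the north/south edges and near the northeast defect). Applying \Cref{kuograph} then expresses $M(R'_{a,k,j,x})$ as a rational combination of matching generating functions of six sub-regions. After the forced lozenges are accounted for, each of these sub-regions should either be another $R'$-region with $(a,k,j,x)$ shifted by small integers, or split by \Cref{graphsplit} into a weighted Proctor region $P'$ and a smaller $R'$-region.

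Next I would verify that the proposed closed form
\[
F(a,k,j,x):=\frac{Q'_{a,k,x}}{Q'_{a,k,0}}\cdot\frac{Q'_{j-k-1,k,0}}{Q'_{j-k-1,k,x}}\cdot M(P'_{j-1,j-1,x})\cdot M(P'_{a-j+k+1,a,k})
\]
satisfies the same recurrence. Using the definitions of $Q'_{a,k,x}$ and $f_a$, the ratios $F(a',k,j',x')/F(a,k,j,x)$ telescope into products of linear factors, and the $M(P')$ factors can be expanded via the weighted Proctor corollary recorded above. The Kuo identity then reduces to a polynomial identity in $a,k,j,x$ which can be checked factor by factor.

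For the induction to close I would take base cases at $a=0$ (the region degenerates to a zigzag strip whose matching generating function is computed directly from the $\tfrac12$-weights on the bumps), at $x=0$ (the ``$Q'_{a,k,x}/Q'_{a,k,0}$'' and ``$Q'_{j-k-1,k,0}/Q'_{j-k-1,k,x}$'' ratios collapse to $1$ and the formula reduces to a pure $P'$-type quantity, which can be checked against \Cref{graphsplit} applied to the collapsed region), and at the extreme positions of the defect index $j$, one of which recovers the Ciucu--Fischer region mentioned in the introduction.

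The main obstacle will be the combinatorial bookkeeping of weights: because the $\tfrac12$-weighted vertical lozenges of the zigzag bumps can be forced (entirely or partially) when vertices are removed, the weight contributed to each of the six Kuo subregions must be tracked and matched against the $1/2^a$ appearing in the formula for $M(P'_{a,b,c})$. Once the dictionary between vertex-removal patterns and the $R'$ parameter shifts is pinned down --- including exactly how many bumps survive in each sub-region --- the resulting algebraic identity is tractable, but establishing that dictionary uniformly in $(a,k,j,x)$ is where the care is needed.
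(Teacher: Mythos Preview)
Your overall strategy---Kuo condensation plus verifying that the closed form $F(a,k,j,x)$ satisfies the resulting recurrence---is exactly the paper's approach, and the paper explicitly says the weighted proof runs parallel to the unweighted one.  Two points, however, are off.

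First, in the paper Kuo condensation is \emph{not} applied directly to $R'_{a,k,j,x}$.  One starts instead from the region obtained by removing only $k-1$ consecutive upward triangles (beginning at position $j+1$) from the northeast side; the four vertices $t,u,v,w$ are then chosen so that, after forcing, every one of the six sub-regions is again an $R'$-region with shifted parameters.  With this particular choice the forced lozenges never touch the weighted bumps along the zigzag, so your anticipated ``main obstacle'' of tracking $\tfrac12$-weights simply does not arise: the recurrence for $R'$ is literally (\ref{recurrence}) with $R$ replaced by $R'$.  Your proposed placement ``at the meeting points of the zigzag side with the north/south edges'' would likely force weighted lozenges and create exactly the bookkeeping problem you worry about.

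Second, your base cases do not match what the induction actually needs.  The recurrence is solved for $M(R'_{a+1,k-1,j+1,x})$ in terms of regions with first index $a,a-1,a-2$, so the induction is on $a$ and one must verify $a=0,1,2$ directly (for the relevant $j$-values $k+2$ and $k+3$).  Separately, the recurrence is only valid for $j>k+1$, so $j=k$ and $j=k+1$ must be handled by the graph-splitting \Cref{graphsplit}, which cuts the region into two $P'$-regions; this is also what covers $j=1,2$.  Taking $x=0$ as a base case does not help, since $x$ moves (to $x+1$) inside the recurrence and is not the induction variable.
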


We will prove \Cref{rakjxthm}; the proof of \Cref{weightedrakjxthm} is similar.  

\begin{proof}
For $k>0$, it is clear that $j$ cannot be greater than $a+k+1$ if we are to remove $k$ triangles from the northeast side. Furthermore, if $j<k$, then \Cref{graphsplit} implies that $M(R_{a,k,j,x})=0$, where we take the induced subgraph $H$ to be the dual graph of the region above the cut (see Figure~\ref{graphsplitting}). 

\begin{figure}[h]
\centering
\includegraphics[height=2in]{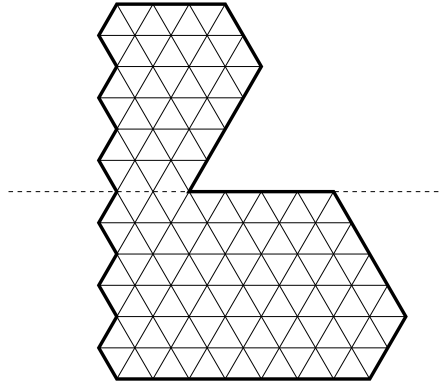}
\caption{The region $R_{2,4,3,3}$ has no tilings.}
\label{graphsplitting}
\end{figure}

We begin by proving \Cref{rakjxthm} for $j=k,k+1$.  In these two cases, we can apply \Cref{graphsplit} with a cut made on the south side of the boundary defect, as in Figure~\ref{kkplus1}.

\begin{figure}[h]
\centering
\includegraphics[height=2.5in]{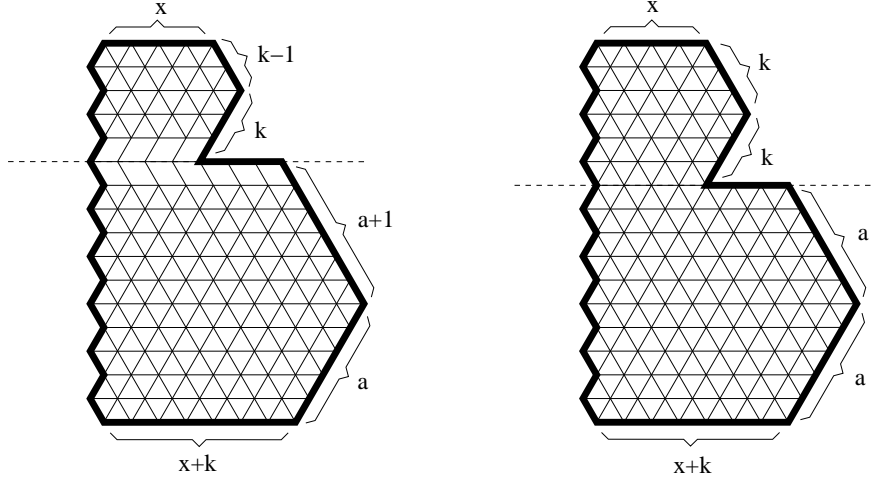}
\caption{The cuts when $j=k$ (left) or $j=k+1$ (right).}
\label{kkplus1}
\end{figure}

For the $j=k$ case, it is clear that
\[ M(R_{a,k,k,x})=M(P_{k-1,k-1,x})M(P_{a,a,x+k}).\]  Therefore, we must show that 
 \begin{equation}\label{jisk}
 M(P_{k-1,k-1,x})M(P_{a,a,x+k})=
 \frac{Q_{a,k,x}}{Q_{a,k,0}}\cdot\frac{Q_{-1,k,0}}{Q_{-1,k,x}}\cdot M(P_{k-1,k-1,x})\cdot M(P_{a+1,a,k}),
 \end{equation}
where the righthand side is obtained by plugging in $j=k$ into the claimed formula in \Cref{rakjxthm}.
We first need a lemma.

\begin{lemma}\label{j12}
For non-negative integers $a,k,\textrm{and } x$,
\begin{enumerate}[(i)]
	\item 
		\begin{multline*}
			Q_{a+1,k,x}=Q_{a,k,x}\cdot (x+k+\lceil\tfrac{a+2}{2}\rceil)(x+k+\lceil\tfrac{a+4}{2}\rceil)\ldots (x+k+a+1)\\
			\times (2x+2k+2\lfloor\tfrac{a+2}{2}\rfloor+1)(2x+2k+2\lfloor\tfrac{a+4}{2}\rfloor+1)\ldots (2x+2k+2a+1)					\end{multline*}
	\item 
		$$M(P_{a+1,a+1,x})=M(P_{a,a,x})\cdot \dfrac{x+a+1}{2x+a+1}\cdot \prod_{i=1}^{a+1}\dfrac{2x+a+i}{a				+i}$$
\end{enumerate}
\end{lemma}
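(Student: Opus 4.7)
The plan is to prove both parts by direct computation from the definitions; neither part requires combinatorial input.

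For part (i), I would form the ratio
\[
\frac{Q_{a+1,k,x}}{Q_{a,k,x}} = (x+k+a+1)^{f_{a+1}(a+1)} \prod_{i=1}^{a}(x+k+i)^{f_{a+1}(i)-f_a(i)} \cdot (2x+2k+2a+1)^{f_a(a)} \prod_{i=1}^{a-1}(2x+2k+2i+1)^{f_a(i)-f_{a-1}(i)}
\]
and then analyze the exponent differences. Rewriting $f_a(i) = \min(i,\,a+1-i)$ makes clear that $f_a$ is a tent-shaped function which is strictly increasing up to the apex and strictly decreasing thereafter. From this one checks that $f_{a+1}(i)-f_a(i)\in\{0,1\}$, equal to $1$ exactly when $i\geq \lceil(a+2)/2\rceil$, and similarly $f_a(i)-f_{a-1}(i)=1$ exactly when $i\geq \lceil(a+1)/2\rceil=\lfloor(a+2)/2\rfloor$. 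Combining these with the two boundary contributions $f_{a+1}(a+1)=f_a(a)=1$ yields precisely the product written in the lemma, once one recognizes that the sequences $\lceil(a+2)/2\rceil,\lceil(a+4)/2\rceil,\ldots,a+1$ and $\lfloor(a+2)/2\rfloor,\lfloor(a+4)/2\rfloor,\ldots,a$ are simply ranges of consecutive integers written in this slightly awkward form. The only care needed is to separate the cases of $a$ even and $a$ odd, but both parities produce the same expression.

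For part (ii), I would use the corollary displayed just after Proctor's theorem, namely
\[
M(P_{a,a,x}) = \prod_{i=1}^{a}\frac{x+i}{2x+i} \prod_{1\leq i\leq j\leq a}\frac{2x+i+j-1}{i+j-1},
\]
and take the ratio $M(P_{a+1,a+1,x})/M(P_{a,a,x})$. The first product contributes exactly one new factor, $\tfrac{x+a+1}{2x+a+1}$; the second product contributes precisely the terms with $j=a+1$, namely $\prod_{i=1}^{a+1}\tfrac{2x+a+i}{a+i}$. Multiplying these yields the identity stated in (ii) immediately.

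Since both parts reduce to straightforward bookkeeping, I do not anticipate any substantive obstacle; the only subtle point is the parity case-analysis for the ceiling/floor-indexed products in (i), which I would handle by checking the two cases side by side.
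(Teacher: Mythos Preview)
Your proposal is correct and matches the paper's approach: the paper disposes of this lemma with the single sentence ``\Cref{j12} is easily verified,'' and what you have written is precisely the direct computation from the definitions of $Q_{a,k,x}$ and the Proctor corollary that this sentence is alluding to. Your analysis of the exponent differences $f_{a+1}(i)-f_a(i)$ via the tent description $f_a(i)=\min(i,a+1-i)$ and your ratio argument for $M(P_{a+1,a+1,x})/M(P_{a,a,x})$ are both sound, so there is nothing to add.
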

\Cref{j12} is easily verified.

We can simplify (\ref{jisk}) by noting that two of the Q polynomials are taken to be 1 since their first index is negative.  Further, \Cref{proctorthm} (as well as the forcing in Figure~\ref{kkplus1}) shows $M(P_{a+1,a,k})=M(P_{a,a,k}).$

 \begin{lemma}\label{jisk2}
 For non-negative integers $a,k,$ and $x$, \[M(P_{a,a,x+k})=\frac{Q_{a,k,x}}{Q_{a,k,0}}\cdot M(P_{a,a,k}).\]
 \end{lemma}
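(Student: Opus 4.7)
The plan is to prove \Cref{jisk2} by induction on $a$, using \Cref{j12} as the engine for the inductive step. The base case $a=0$ is trivial: both sides equal $1$ since all relevant products are empty and $M(P_{0,0,c})=1$ for every $c$.

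For the inductive step, assume the identity holds for $a$. The claim for $a+1$ is then equivalent, after dividing by the inductive hypothesis, to the single ratio identity
\[
\frac{M(P_{a+1,a+1,x+k})/M(P_{a,a,x+k})}{M(P_{a+1,a+1,k})/M(P_{a,a,k})} \;=\; \frac{Q_{a+1,k,x}/Q_{a,k,x}}{Q_{a+1,k,0}/Q_{a,k,0}}.
\]
The right-hand side is computed by applying \Cref{j12}(i) once in general and once with $x=0$, giving
\[
\prod_{n=\lceil (a+2)/2\rceil}^{a+1}\frac{x+k+n}{k+n}\; \prod_{n=\lfloor (a+2)/2\rfloor}^{a}\frac{2x+2k+2n+1}{2k+2n+1}.
\]
The left-hand side is computed by applying \Cref{j12}(ii) with $x$ replaced by $x+k$ and by $k$ respectively, giving
\[
\frac{(x+k+a+1)(2k+a+1)}{(k+a+1)(2x+2k+a+1)}\prod_{i=1}^{a+1}\frac{2x+2k+a+i}{2k+a+i}.
\]

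To finish, I would split the product over $i$ according to the parity of $m=a+i$. For even $m=2n$ one uses $(2x+2k+2n)/(2k+2n)=(x+k+n)/(k+n)$, contributing to the first product on the right; for odd $m=2n+1$ one gets a factor of the form appearing in the second product. The resulting index ranges do not match the target ranges exactly, but after absorbing the four-factor prefactor the discrepancy disappears. This must be checked in two cases: when $a$ is even, the prefactor contributes a single new copy of $(x+k+n)/(k+n)$ at $n=a+1$ and cancels a spurious copy of $(2x+2k+2n+1)/(2k+2n+1)$ at $n=a/2$; when $a$ is odd, the prefactor instead cancels one copy of $(x+k+n)/(k+n)$ at $n=(a+1)/2$ and adds one at $n=a+1$, while all the odd-$m$ factors come from the product alone.

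The main obstacle is precisely this parity-dependent boundary bookkeeping: one must verify in each parity that the lower endpoints of the two products on the right (namely $\lceil (a+2)/2\rceil$ and $\lfloor (a+2)/2\rfloor$) arise correctly from the interplay of the prefactor with the parity split. Once this verification is done, factor-by-factor matching of exponents completes the induction.
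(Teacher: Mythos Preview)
Your proposal is correct and follows essentially the same approach as the paper: induction on $a$, with \Cref{j12} supplying both the $Q$-ratio and the $M(P_{\cdot,\cdot,\cdot})$-ratio needed for the inductive step. The paper's own proof is terser---it writes out the two sides of the resulting identity after multiplying numerator and denominator by $2^{\lfloor (a+2)/2\rfloor}$ and then declares the equality ``easily checked''---whereas you make the verification explicit via the parity split on $m=a+i$; but these are the same computation presented two ways, and your parity-case bookkeeping is accurate.
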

\begin{proof}
We proceed by induction on $a$.  The result is clear for $a=0$ as both sides are 1.  Assuming it holds for $a$, we divide both sides of the claimed formula in \Cref{jisk2} with index $a+1$ by the same result with index $a$.  Using \Cref{j12}, and multiplying the numerator and denominator on the righthand side by $2^{\lfloor \frac{a+2}{2}\rfloor}$, we have simplified the proof to showing that

\begin{multline*}
\frac{x+k+a+1}{2x+2x+a+1}\cdot\prod_{i=1}^{a+1}\frac{2x+2k+a+i}{a+i}=\\
\frac{(2x+2k+a+2)(2x+2k+a+3)\ldots (2x+2k+2a+2)}{(2k+a+2)(2k+a+3)\ldots (2k+2a+2)}\\
\times\frac{k+a+1}{2k+a+1}\prod_{i=1}^{a+1}\frac{2k+a+i}{a+i}
\end{multline*}

This is easily checked.
\end{proof} 

The $j=k+1$ case is nearly identical, with \Cref{jisk2} proving that the formula for its number of tilings using \Cref{graphsplit} matches the claimed formula in \Cref{rakjxthm}.  It is important to point that these two cases also prove  \Cref{rakjxthm} for $j=1$ and $j=2$ based upon the possible values for $k$, as we will assume $j>2$ from now on.

 For $j>k+1$, we apply \Cref{kuograph} to a region slightly different from $R_{a,k,j,x}.$  Instead of removing a run of $k$ consecutive upward-facing unit triangles starting at position $j$ from the northeast side, only remove $k-1$ such triangles from position $j+1$.  Figure~\ref{rkuo} shows the locations of $t,u,v,\textrm{and }w,$ all on the outside face of the dual graph.  Notice that $w$ is pointing downwards while $t,u,\textrm{and }v$ are pointing upwards.
 
 \begin{figure}[h]
 \centering
 \includegraphics[height=2in]{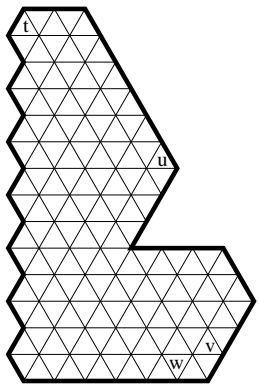}
 \caption{Applying Kuo condensation to determine $M(R_{a,k,j,x}),$ with $a=3,k=4,j=6,\textrm{and }x=2$.}
 \label{rkuo}
 \end{figure}

 Applying Kuo condensation to such a region gives us a recurrence involving six new regions.  They are shown in Figure~\ref{recurrencepic}.  In each subfigure, the triangles corresponding to removed vertices are labelled and any subsequently forced lozenges are shown.
 
 \begin{figure}
 \centering
 \includegraphics[height=7in]{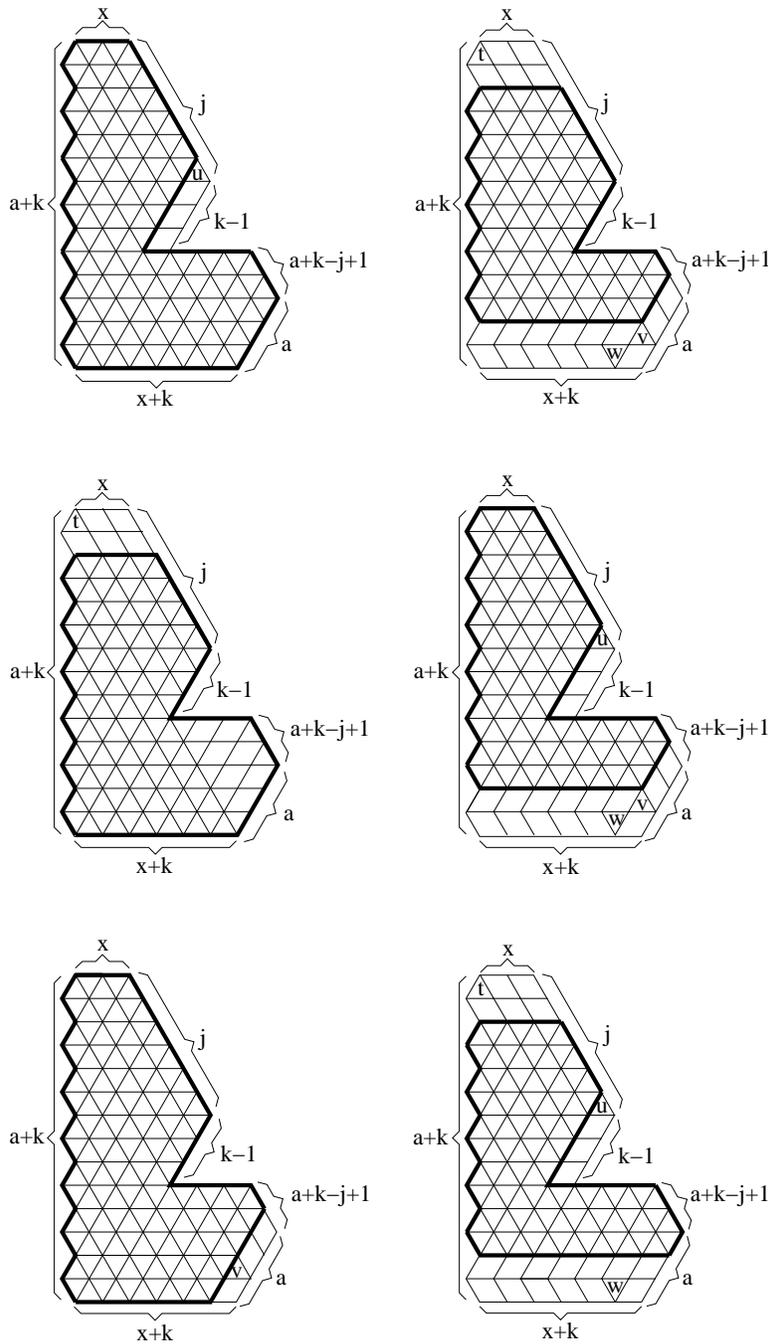}
 \caption{The regions obtained via Kuo condensation for $M(R_{a,k,j,x})$.}
 \label{recurrencepic}
 \end{figure}

 It is evident that 
 \begin{align*}
 M(G-u) &=  M(R_{a,k,j,x}),\\
 M(G-\{t,v,w\}) &=M(R_{a-1,k-1,j-1,x+1}),\\
 M(G-t) &=M(R_{a,k-1,j-1,x+1}),\\
 M(G-\{u,v,w\}) &=M(R_{a-1,k,j,x}),\\
 M(G-v) &=M(R_{a+1,k-1,j+1,x}),\textrm{ and }\\
 M(G-\{t,u,w\}) &=M(R_{a-2,k,j-2,x+1}).
 \end{align*}
 
 Therefore, it must be the case that
 \begin{multline}\label{recurrence}
 M(R_{a,k,j,x})M(R_{a-1,k-1,j-1,x+1})=\\
 M(R_{a,k-1,j-1,x+1})M(R_{a-1,k,j,x})+M(R_{a+1,k-1,j+1,x})M(R_{a-2,k,j-2,x+1}).
 \end{multline}
 
 At this point, we proceed by induction on $a$.  We can rewrite (\ref{recurrence}) as
 
 \begin{multline}\label{recurrencerewrite}
 M(R_{a+1,k-1,j+1,x})=\\
  \dfrac{M(R_{a,k,j,x})M(R_{a-1,k-1,j-1,x+1})-M(R_{a,k-1,j-1,x+1})M(R_{a-1,k,j,x})}{M(R_{a-2,k,j-2,x+1})},
 \end{multline}
so that the region on the lefthand side has southeast side-length $a+1$, while all those on the right are shorter, ranging from $a-2$ to $a$. Since $j>k+1$, $j-2$ is at least $k$ so that the $j$ index falls into the proper range.  In Figure~\ref{cantile}, we break up $R_{a-2,k,j-2,x+1}$ into five regions: three parallelograms and two Proctor regions.  This shows that $M(R_{a-2,k,j-2,x+1})\neq 0$ since the parallelograms each have a unique tiling and \Cref{proctorthm} proves that the other regions have tilings.  If we show that the formula in \Cref{rakjxthm} satisfies (\ref{recurrence}) or (\ref{recurrencerewrite}) and that the formula holds for $a=0,1,2$, we will have proven the desired result.  
 
 \begin{figure}
 \centering
 \includegraphics[height=2in]{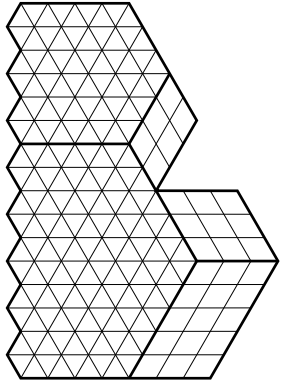}
 \caption{Breaking up $R_{a-2,k,j-2,x+1}$ in this way shows that a tiling exists.}
 \label{cantile}
 \end{figure}
 
 We will show that (\ref{recurrence}) holds for $a>2, k>0, j>2,$ and $x\geq 0.$ Substituting our claimed formula from \Cref{rakjxthm} into (\ref{recurrence}) and rearranging terms, we need to show:
 
 \begin{multline}\label{pluggedinrecurrence}
  \left[M(P_{j-1,j-1,x})M(P_{j-2,j-2,x+1})\right]\cdot\left[M(P_{a-j+k+1,a,k})M(P_{a-j+k,a-1,k-1})\right]\\
 \times\dfrac{\left[Q_{a,k,x}Q_{a-1,k-1,x+1}\right]\cdot\left[Q_{j-k-1,k,0}Q_{j-k-1,k-1,0}\right]}{\left[Q_{a,k,0}
 Q_{a-1,k-1,0}\right]\cdot\left[Q_{j-k-1,k,x}Q_{j-k-1,k-1,x+1}\right]}\\
  =\left[M(P_{j-2,j-2,x+1})M(P_{j-1,j-1,x})\right]\cdot\left[M(P_{a-j+k+1,a,k-1})M(P_{a-j+k,a-1,k})\right]\\
 \times \dfrac{\left[Q_{a,k-1,x+1}Q_{a-1,k,x}\right]\cdot\left[Q_{j-k-1,k-1,0}Q_{j-k-1,k,0}\right]}{\left[Q_{a,k-1,0}Q_{a-1,k,
 0}\right]\cdot\left[Q_{j-k-1,k-1,x+1}Q_{j-k-1,k,x}\right]}\\
  +\left[M(P_{j,j,x})M(P_{j-3,j-3,x+1})\right]\cdot\left[M(P_{a-j+k,a+1,k-1})M(P_{a-j+k+1,a-2,k})\right]\\
  \times \dfrac{\left[Q_{a+1,k-1,x}Q_{a-2,k,x+1}\right]\cdot\left[Q_{j-k+1,k-1,0}Q_{j-k-3,k,0}\right]}{\left[Q_{a+1,k-1,0}
  Q_{a-2,k,0}\right]\cdot\left[Q_{j-k+1,k-1,x}Q_{j-k-3,k,x+1}\right]},\\
 \end{multline}
 
where $M(P_{a,b,c})$ are given by \Cref{proctorthm}.  We've broken up each of the three terms in this equation into six parts: two of the parts are products of matching generating functions of Proctor regions, and the other four parts are products of $Q$ polynomials.  We select corresponding parts of the three terms and simplify them.  Here we will show only the simplification process for two of the six parts.  Combining the results together will prove (\ref{pluggedinrecurrence}).

First we consider the parts which are products of matching generating functions of Proctor regions whose first two indices are equal:
	\begin{multline*}
	M(P_{j-1,j-1,x})M(P_{j-2,j-2,x+1}), \textrm{  }M(P_{j-2,j-2,x+1})M(P_{j-1,j-1,x}),\\
	M(P_{j,j,x})M(P_{j-3,j-3,x+1}).
	\end{multline*}
	Dividing by $M(P_{j-1,j-1,x})M(P_{j-2,j-2,x+1})$ makes the first two products both $1$.  For the third, notice that 
	$$\frac{M(P_{j,j,x})}{M(P_{j-1,j-1,x})}=\frac{(x+j)}{(2x+j)}\cdot\prod_{i=1}^{j}\frac{2x+j-1+i}{j-1+i}.$$

	Applying this twice and simplifying shows that $$\frac{M(P_{j,j,x})M(P_{j-3,j-3,x+1})}{M(P_{j-1,j-1,x})M(P_{j-2,j-2,x		+1})}=\frac{(x+j)(2x+2j-1)}{2(2j-3)(2j-1)},$$ so that these products simplify, in order, to $$1,\textrm{  }1,\textrm{  }\frac{(x+j)(2x+2j-1)}		{2(2j-3)(2j-1)}.$$

Now we simplify one of the parts consisting of $Q$ polynomials: 
	$$Q_{a,k,x} Q_{a-1,k-1,x+1}, \textrm{  }Q_{a,k-1,x+1} Q_{a-1,k,x}, \textrm{  }Q_{a+1,k-1,x} Q_{a-2,k,x+1}.$$
	Let us divide by $Q_{a,k,x} Q_{a-1,k-1,x+1}.$  The second factor is equal to the first.  Further 
	$$\dfrac{Q_{a+1,k-1,x}}{Q_{a,k,x}}=\prod_{i=1}^{\lceil\frac{a+1}{2}\rceil} (x+k+i-1)\prod_{i=1}^{\lceil\frac{a}{2}\rceil} 		(2x+2k+2i-1).$$
	 We can apply this twice and simplify to get $$1,\textrm{  } 1, \textrm{  }(x+k)(2x+2k+1).$$

 Here are the results obtained when simplifying the other four parts.
 
 \begin{itemize}
 \item Divide each of 
 \begin{multline*}M(P_{a-j+k+1,a,k})M(P_{a-j+k,a-1,k-1}),\\
  \textrm{  }M(P_{a-j+k+1,a,k-1})M(P_{a-j+k,a-1,k}), \\\textrm{and }M(P_{a-j+k,a		+1,k-1})M(P_{a-j+k+1,a-2,k})
	\end{multline*}
 by the first product and rearrange factors. These simplify to $$\frac{(3k+2a-j-1)(3k+2a-j)}{(2k+a)(2k+a-1)}, \textrm{  }\frac{(2k+a-j)}{(k+a)},\textrm{  }\frac{(j-k-1)(j-k)}{j(j-1)}.$$
 
 \item The three products 
 \begin{multline*}Q_{j-k-1,k,0}Q_{j-k-1,k-1,0}, \textrm{  }Q_{j-k-1,k-1,0}Q_{j-k-1,k,0},\\ \textrm{and }Q_{j-k+1,k-1,0}Q_{j-k-3,k,0}
 \end{multline*} simplify to $$1,\textrm{  } 1,\textrm{  } (j-1)j(2j-3)(2j-1)$$ when divided by the first.
 
 \item We divide $$[Q_{a,k,0} Q_{a-1,k-1,0}]^{-1},\textrm{  } [Q_{a,k-1,0}Q_{a-1,k,0}]^{-1}, \textrm{ and }[Q_{a+1,k-1,0}Q_{a-2,k,0}]^{-1}$$ each by the middle product, and rearrange some factors.  The simplification leads to $$(k+a-\lfloor\frac{a+1}{2}\rfloor)(2k+2a-2\lfloor\frac{a}{2}\rfloor-1), \textrm{  }(k+1)(2k			+2a-1), \textrm{  }1.$$
 
 \item The final three products, 
 	\begin{multline*} [Q_{j-k-1,k,x}Q_{j-k-1,k-1,x+1}]^{-1}, \textrm{  }[Q_{j-k-1,k-1,x+1}Q_{j-k-1,k,x}]^{-1}, \\ \textrm{and }[Q_{j-k+1,k-1,x}Q_{j-		k-3,k,x+1}]^{-1}
	\end{multline*} can be reduced to $$1, \textrm{  } 1, \textrm{ and } \frac{1}{(x+k)(x+j)(2x+2k+1)(2x+2j-1)}$$ if we divide by the first.
 
 \end{itemize}
 
 Combining these results (and reducing fractions) simplifies the proof of (\ref{recurrence}) to verifying that the following equation holds:
 
 \begin{multline}\label{almost}
\frac{(3k+2a-j-1)(3k+2a-j)}{(2k+a)(2k+a-1)}\cdot (k+a-\lfloor\frac{a+1}{2}\rfloor)(2k+2a-2\lfloor\frac{a}{2}\rfloor -1) \\
= \frac{(2k+a-j)}{(k+a)}\cdot(k+1)(2k+2a-1)+\frac{(j-k-1)(j-k)}{2}.
 \end{multline}
 
 It is easy to see that (\ref{almost}) is true, showing that the claimed formula from \Cref{rakjxthm} satisfies the recurrence implied by \Cref{kuograph}.

We now prove that \Cref{rakjxthm} holds for $a=0,1,2,$ assuming $k>0.$  Based upon the values that $j$ can take, we have three cases:
\begin{enumerate}
\item $a=1 \textrm{ and } j=k+2$,
\item $a=2 \textrm{ and } j=k+2$,
\item $a=2 \textrm{ and } j=k+3$.
\end{enumerate}

In each of these three cases we need to show that the formula in \Cref{rakjxthm} holds.  
In cases (1) and (3) all of the Q polynomials cancel, and $M(P_{a-j+k+1,a,k})=1$ because at least one of the indices is 0.  Therefore, we only need to check that the $R$ region and remaining Proctor region in the formula in \Cref{rakjxthm} have the same number of tilings.  This is easily accomplished since in each case some lozenges in the $R$ region are forced, making the two regions essentially identical.  Case (3) follows by Lemma 4.3(a) in \cite{ciucufischer13}.

Finally, if $k=0$ we remove no unit triangles from the northeast side.  For the region to be balanced, it must be the case that this side has length $a$, just as the southeast side does.  Thus, we must have $M(R_{a,0,j,x})=M(P_{a,a,x}).$  Substituting the claimed formula from \Cref{rakjxthm} implies that we need to show

\begin{equation}\label{kzero}
M(P_{a,a,x})=\frac{Q_{a,0,x}}{Q_{a,0,0}}\cdot\frac{Q_{j-1,0,0}}{Q_{j-1,0,x}}\cdot M(P_{j-1,j-1,x})\cdot M(P_{a-j+1,a,0}).
\end{equation}

It is clear that $M(P_{a-j+1,a,0})=1$.  Using \Cref{jisk2} with $k=0$ and $a=j-1$, we see that 

\[M(P_{j-1,j-1,x})=\frac{Q_{j-1,0,x}}{Q_{j-1,0,0}},\]
as $M(P_{j-1,j-1,0})=1.$
This reduces (\ref{kzero}) to
\[M(P_{a,a,x})=\frac{Q_{a,0,x}}{Q_{a,0,0}},\]
which is proven again by \Cref{jisk2} with $k=0.$
 
 

The index $j$ drops out of the lefthand side of (\ref{kzero}) entirely after applying \Cref{jisk2} to $M(P_{j-1,j-1,x})$.  If we think of $M(P_{a,b,c})$ as an expression involving integers $a,b,$ and $c$, (which need not arise from a realizable region $P_{a,b,c}$), then $j$ can take any integer value when $k=0$.  
\end{proof}

The proof of \Cref{weightedrakjxthm} is similar to that of \Cref{rakjxthm}.  \Cref{kuograph} is applied identically - the locations of $t,u,v,\textrm{ and } w$ are the same.  The resulting recurrence is the same as (\ref{recurrence}), with $R$ replaced by $R'$.  This only holds because none of the forced lozenges in Figure~\ref{recurrencepic} are any of those which are weighted by $\frac{1}{2}$ in $R'$.  The processes of verifying that the recurrence and base cases hold is analogous to the work done above.
\newpage
 

\begin{thebibliography}{9}

\bibitem{ciucu97} Mihai Ciucu.  \newblock Enumeration of perfect matchings in graphs with reflexive symmetry.  \newblock {\em J. Combinatorial Theory Ser. A}, 77: 67--97, 1997.

\bibitem{ciucufischer13} Mihai Ciucu and Ilse Fischer.  \newblock Proof of two conjectures of Ciucu and Krattenthaler on the enumeration of lozenge tilings of hexagons with cut off corners.  \newblock \arxiv{1309.4640}, 2013.

\bibitem{ck} Mihai Ciucu and Christian Krattenthaler.  \newblock Enumeration of lozenge tilings of hexagons with cut off corners.  \newblock  {\em J. Combinatorial Theory Ser. A}, 100: 201--231, 2002.

\bibitem{ciuculai14} Mihai Ciucu and Tri Lai.  \newblock Proof of Blum's conjecture on hexagonal dungeons.  \newblock  {\em J. Combinatorial Theory Ser. A}, 125: 273-305, 2014.

\bibitem{kuo} Eric~H. Kuo.  \newblock Applications of graphical condensation for enumerating matchings and tilings.  \newblock{\em Theoretical Computer Science}, 319(1-3):29--57, 2004.

\bibitem{lai} Tri Lai.  \newblock Enumeration of lozenge tilings of a hexagon with three holes.  \newblock \arxiv{1502.05780}, 2015.

\bibitem{macmahon} Percy~A. MacMahon.  \newblock Memoir on the Theory of Partitions of Numbers.  Part V: Partitions in Two-Dimensional Space.  \newblock {\em Philosophical Transactions of the Royal Society of London Ser. A}, 211:75--110, 1912.

\bibitem{proctor} Robert~A. Proctor.  \newblock Odd symplectic groups.  \newblock {\em Inventiones mathematicae}, 92(2):307--332, 1988.

\end{thebibliography}
 \end{document}